\theoremstyle{plain}
\newtheorem{thm}{Theorem}[subsection]
\newtheorem{prop}[thm]{Proposition}
\newtheorem{lemma}[thm]{Lemma}
\theoremstyle{definition}
\newtheorem{rem}[thm]{Remark}
\title{A Note on Potential Diagonalizability of Crystalline Representations}
\author{HUI GAO, TONG LIU}
\address{Beijing International Center for Mathematical Research, Peking University, No. 5 Yiheyuan Road, Haidian District, Beijing 100871, China }
 \email{gaohui@math.pku.edu.cn}
\address{Department of Mathematics, Purdue University, 150 N. University Street, West Lafayette, IN 47907, USA}
\email{tongliu@math.purdue.edu}
\subjclass{Primary 14F30,14L05}
\keywords{$p$-adic Galois representations, Crystalline representations, nilpotency, Potential Diagonalizability, Fontaine-Laffaille Data }
\begin{document}
\maketitle

\pagestyle{myheadings}
\markright{A Note on Potential Diagonalizability of Crystalline Representations}

\newcommand{\Zp}{\mathbb{Z}_p}
\newcommand{\Qp}{\mathbb{Q}_p}
\newcommand{\Z}{\mathbb{Z}}
\newcommand{\Q}{\mathbb{Q}}
\newcommand{\R}{\mathbb{R}}
\newcommand{\C}{\mathbb{C}}
\newcommand{\N}{\mathbb{N}}

\newcommand{\Qpbar}{\overline{\Q}_p}

\newcommand{\cris}{\textnormal{cris}}
\newcommand{\dR}{\textnormal{dR}}
\newcommand{\Spec}{\textnormal{Spec}}
\newcommand{\Hom}{\textnormal{Hom}}
\newcommand{\Frac}{\textnormal{Frac}}
\newcommand{\Fil}{\textnormal{Fil}}
\newcommand{\Mod}{\textnormal{Mod}}
\newcommand{\ModFI}{\textnormal{ModFI}}
\newcommand{\Rep}{\textnormal{Rep}}
\newcommand{\Gal}{\textnormal{Gal}}
\newcommand{\Ker}{\textnormal{Ker}}
\newcommand{\GL}{\textnormal{GL}}
\newcommand{\HT}{\textnormal{HT}}

% l-adic stuff
\newcommand{\Zl}{\mathbb{Z}_{\ell}}
\newcommand{\Ql}{\mathbb{Q}_{\ell}}
\newcommand{\A}{\mathbb{A}}

%p-adic Hodge theory
\newcommand{\bigM}{\mathcal{M}}
\newcommand{\bigD}{\mathcal{D}}
\newcommand{\huaS}{\mathfrak{S}}
\newcommand{\huaM}{\mathfrak{M}}
\newcommand{\Acris}{A_{\textnormal{cris}}}
\newcommand{\bigO}{\mathcal{O}}

\newcommand{\bigN}{\mathcal{N}}
\newcommand{\huaN}{\mathfrak{N}}
\newcommand{\m}{\mathfrak{m}}

\newcommand{\M}{\mathcal{M}}
\newcommand{\BdR}{B_{\textnormal{dR}}}
\newcommand{\bigOE}{\mathcal{O}_{\mathcal{E}}}

\newcommand{\bolda}{\boldsymbol{\alpha}}
\newcommand{\bolde}{\boldsymbol{e}}

\newcommand{\boldd}{\boldsymbol{d}}

\newcommand{\tildeS}{\tilde{S}}
\newcommand{\tS}{\tilde{S}}
\newcommand{\tildeM}{\tilde{M}}
\newcommand{\tM}{\tilde{M}}

\newcommand{\Asthat}{\widehat{A_{\textnormal{st}}}}
\newcommand{\D}{\mathcal{D}}

\newcommand{\MF}{MF^{(\varphi, N)}}
\newcommand{\MFwa}{MF^{(\varphi, N)-\textnormal{w.a.}}}
\newcommand{\bigMF}{\mathcal{MF}^{(\varphi, N)}}
\newcommand{\bigMFwa}{\mathcal{MF}^{(\varphi, N)-\textnormal{w.a.}}}

%%%

\begin{abstract}
 Let $K_0/\Qp$ be a finite unramified extension, $G_{K_0}$ the Galois group $\text{Gal}(\Qpbar/K_0)$. We show that all crystalline representations of $G_{K_0}$ with Hodge-Tate weights $\subseteq\{0, \ldots, p-1\}$ are potentially diagonalizable.
\end{abstract}
\footnote{The second author  is partially supported by NSF grant DMS-0901360.}
\tableofcontents
\section{Introduction}
Let $p$ be a prime,  $K$ a finite  extension over $\Q_p$ and $G_{K}$ the absolute Galois group $\text{Gal}(\Qpbar/K)$. In \cite{BLGGT10} \S1.4, \emph{potential diagonalizability} is defined for a potentially crystalline representation of $G_{K}$. Since potential diagonalizability is the local condition at $p$ for a global Galois representation in the automorphy lifting theorems proved in \cite{BLGGT10} (cf. Theorem {\bf B, C}), it is quite interesting to investigate what kind of potentially crystalline representations are indeed potentially diagonalizable. Let $K_0$ be a finite unramified extension of $\Q_p$.
By using Fontaine-Laffaille's theory, Lemma 1.4.3 (2) in \cite{BLGGT10}  proved that any crystalline representation of $G_{K_0}$ with Hodge-Tate weights in $\{0, \dots, p-2\}$ is potentially diagonalizable.

In this short note, we show that the idea in \cite{BLGGT10} can be extended to prove the potential diagonalizability of crystalline representations of $G_{K_0}$ with Hodge-Tate weights in $\{0 , \dots , p-1\}$. Let $\rho: G_{K_0} \to \GL_d (\Qpbar)$ be a crystalline representation with Hodge-Tate weights in $\{0, \dots , p-1\}$. To prove the potential diagonalizability of $\rho$, we first reduce to the case that $\rho$ is irreducible. Then $\rho$ is  \emph{nilpotent} (see definition in \S2.2). Note that  Fontaine-Laffaille's theory can be extended to nilpotent representations. Hence we can follow the similar idea in \cite{BLGGT10} to conclude the potential diagonalizability of $\rho$.

\

{\bf Acknowledgement: } It is a pleasure to thank David Geraghty and Toby Gee for very useful conversations and correspondence. We also would like to thank the anonymous referee for helping to improve the exposition.

\section*{Notations}
Throughout this note, $K$ is always a finite extension of $\Q_p$ with the absolute Galois group $G_{K}:= \Gal(\Qpbar/K)$. Let $K_0$ be a  finite unramified extension of $\Qp$ with residue field $k$. We denote $W(k)$ its ring of integers and $\text{Frob}_{W(k)}$ the arithmetic Frobenius on $W(k)$. If  $E$ is a finite extension of $\Qp$ then we write $\bigO$ the ring of integers, $\varpi$ its uniformizer and $\mathbb{F}= \bigO/\varpi\bigO$ its residue field. If $A$ is a local ring, we denote $\m_A$ the maximal ideal of $A$ and equip $A$ with the $\m_A$-adic topology. Let $\rho: G_K\to \GL_d (A)$ be a continuous representation with the ambient space $M = \oplus_{i=1}^d A$. We always denote $\rho^*$ the dual representation induced by $\Hom_{A}(M , A)$. Let $\rho: G_K\to \GL_d (\Qpbar)$ be a de Rham representation of $G_K$. Then  $D_\dR(\rho^*)$ is a filtered $K \otimes_{\Q_p}\Qpbar$-module. For any embedding $\tau : K \to \Qpbar$, we define the set of $\tau$-Hodge-Tate weights
$$\HT_\tau (\rho):  = \{i \in \Z| \text{gr}^i (D_\dR (\rho^*)) \otimes_{K \otimes_{\Q_p} \Qpbar} (K \otimes_{K, \tau} \Qpbar) \not = 0\}.$$
In particular, if $\epsilon$ is the $p$-adic cyclotomic character then $\HT_\tau (\epsilon)= \{1\}$ (here our convention is slightly different from that in \cite{BLGGT10}).

\newcommand{\rhobar}{\overline{\rho}}
\newcommand{\PD}{{potentially diagonalizable}}

\section{Definitions and Preliminaries}

\subsection{Potential Diagonalizability}

\newcommand{\bigOp}{\bigO_{\overline{\Q}_p}}
\newcommand{\bigR}{R^{\box}_{\overline{\rho_1}, \HT_{\tau}(\rho_1), K'-cris} \otimes \Qpbar }

 We recall the definition of potential diagonalizability from \cite{BLGGT10}. Given two continuous representations $\rho_1, \rho_2: G_K \to \GL_d(\bigOp)$, we say that $\rho_1$ \emph{connects to} $\rho_2$, denoted by $\rho_1 \sim \rho_2$,  if:

\begin{itemize}
\item the two reductions $\bar \rho_i:= \rho_i  \bmod \mathfrak{m}_{\bigOp}$ are equivalent to each other;
\item both $\rho_1$ and $\rho_2$ are potentially crystalline;
\item for each embedding $\tau: K \hookrightarrow \overline{\Q}_p$, we have $\HT_{\tau}(\rho_1)=\HT_{\tau}(\rho_2)$;
\item $\rho_1$ and $\rho_2$ define points on the same irreducible component of the scheme  $\Spec(R^{\Box}_{\bar \rho_1, \{\HT_{\tau}(\rho_1)\}, K'\textnormal{-cris}}[\frac 1 p] )$ for some sufficiently large field extension $K'/K$. Here $R^{\Box}_{\bar \rho_1, \{\HT_{\tau}(\rho_1)\}, K'\textnormal{-cris}}$ is the quotient of the framed universal deformation ring $R^{\Box}_{\bar \rho_1}$ corresponding to liftings $\rho$ with $\HT_{\tau}(\rho)=\HT_{\tau}(\rho_1)$ for all $\tau$ and with $\rho \mid_{G_{K'}}$ crystalline. The existence of $R^{\Box}_{\bar \rho_1, \{\HT_{\tau}(\rho_1)\}, K'\textnormal{-cris}}$ is the main result of \cite{Kis08}.
\end{itemize}

Clearly the relation $\sim$ is an equivalence relation. A representation $\rho: G_K \to \GL_d(\bigOp)$ is called \emph{diagonalizable} if it is crystalline and connects to a sum of crystalline characters $\chi_1 \oplus \cdots \oplus \chi_d$. It is called \emph{potentially diagonalizable} if $\rho \mid_{G_{K'}}$ is diagonalizable for some finite extension $K'/K$.

\begin{rem}
By Lemma 1.4.1 of \cite{BLGGT10},  the potential diagonalizability is well defined for a representation $\rho: G_K \to \GL_d(\Qpbar)$ because for any two $G_K$-stable $\bigO_{\Qpbar}$-lattices $L$ and $L'$,  $L$ is potentially diagonalizable if and only if $L'$ is potentially diagonalizable.
\end{rem}

\begin{lemma}\label{reduce to irreducible}
Suppose $\rho: G_K \to \GL_d (\Qpbar) $ is potentially crystalline. Let $\Fil^i$ be a $G_K$-invariant filtration on $\rho$.  Then $\rho$ is \PD \ if and only if $\oplus_i \textnormal{gr}^i\rho$ is \PD.
\end{lemma}
\begin{proof} We can always choose a $G_K$-stable $\bigO_{\Qpbar}$-lattice $M$ inside the ambient space of $\rho$ such that $\Fil^i \rho \cap M$ is an $\bigO_{\Qpbar}$-summand of $M$ and the reduction $\bar M$ is semi-simple. Then the lemma follows item (7) of the numbered list preceding Lemma 1.4.1 of \cite{BLGGT10}.
\end{proof}

\subsection{Nilpotency and Fontaine-Laffaille Data}

\newcommand{\bigMFO}{\mathcal{MF}_{\bigO}}
\newcommand{\MFtor}{\mathcal{MF}_{\bigO,  \textnormal{tor}}}
\newcommand{\MFfree}{\mathcal{MF}_{\bigO,  \textnormal{fr}}}
\newcommand{\repcris} {\textnormal{Rep}_{E, \textnormal{cris}}^{[0, p-1]}(G_{K_0})}
\newcommand{\repcrisO} {\textnormal{Rep}_{\bigO, \textnormal{cris}}^{[0, p-1]}(G_{K_0})}
\newcommand{\repcrisOu} {\textnormal{Rep}_{\bigO, \textnormal{cris}}^{[0, p-1], \n}(G_{K_0})}
\newcommand{\n} {\textnormal{n}}

Let $E$ be a finite extension of $\Q_p$. Recall that we write $\bigO$ the ring of integers, $\varpi$ its uniformizer and $\mathbb{F}= \bigO/\varpi\bigO$ its residue field. Write $W(k)_{\bigO}:= W(k) \otimes_{\Z_p} \bigO$. By imitating \cite{FL82} \S7.7,
let $\bigMFO$ denote the category of finitely generated $W(k)_\bigO$-modules $M$ with
\begin{itemize}
\item a decreasing filtration $\Fil^i M$ by $W(k)_\bigO$-submodules which are $W(k)$-direct summands, where $\Fil^0 M=M$ and $\Fil^p M=\{0\}$;
\item $\textnormal{Frob}_{W(k)}\otimes 1$-semi-linear and $1 \otimes \bigO$-linear maps $\varphi_i : \Fil^i M \to M$ with $\varphi_i\mid_{\Fil^{i+1}M} = p\varphi_{i+1}$ and $\sum_{i=0}^{p-1} \varphi_i(\Fil^iM)=M$.
\end{itemize}
The morphisms in $\bigMFO$ are $W(k)_\bigO$-linear morphisms that are compatible with $\varphi_i$ and $\Fil^i$ structures.
We denote $\MFtor$ the full sub-category of $\bigMFO$ consisting of objects which are killed by some $p$-power, and denote $\MFfree$ the full category of $\bigMFO$ whose objects are finite free over $W(k)_{\bigO}$. Obviously, if $M \in \MFfree$ then $M/ \varpi^m M$ is in $\MFtor$ for all $m$.

It turns out that the category $\MFtor$ is abelian (see \S1.10 in \cite{FL82}).  An object $M$ in $\MFtor$ is called \emph{nilpotent} if there is no nontrivial subobject  $M'\subset M$ such that $\Fil^{1}M'=\{0\}$. Denote the full subcategory of nilpotent objects by $\MFtor^\n$. An object $M \in \MFfree$ is called \emph{nilpotent} if $M/ \varpi^m M$ is nilpotent for all $m$. Denote by $\MFfree^\n$ the full subcategory of $\MFfree$ whose objects are nilpotent.

We refer readers to \cite{fo3} for the construction  and details of the period ring $\Acris$ (and $\Acris$ is just $S$ in \cite{FL82}). Here we just recall that $\Acris$ is a $W(\bar k)$-algebra with a decreasing filtration of ideals $\Acris = \Fil ^0 \Acris \supset   \Fil ^1 \Acris \supset \ldots$, a continuous ring endomorphism $\varphi$ which extends Frobenius on $W(\bar k)$ and a continuous $G_{K_0}$-action which commutes with $\varphi$ and preserves $\Fil ^i \Acris$. It turns out that $\varphi (\Fil ^i \Acris ) \subset p^i \Acris$ for $1 \leq i \leq p-1$ and we define maps $\varphi_i : = \varphi/{p^i} : \Fil ^i \Acris \to \Acris$.
Let $\Rep_{\bigO}(G_{K_0})$ be the category of finitely generated $\bigO$-modules with  continuous $\bigO$-linear $G_{K_0}$-action. We define a  functor $T_\cris^*$ from the category $\MFtor ^\n$ (resp. $\MFfree^\n$) to $\Rep_{\bigO}(G_{K_0})$:
$$T_\cris^* (M): = \Hom_{W(k), \varphi_i, \Fil ^i} (M, \Acris \otimes_{\Z_p} (\Q_p/\Z_p)) \text{ if } M \in \MFtor , $$
 and
$$T_\cris^* (M) := \Hom_{W(k), \varphi_i, \Fil ^i} (M, \Acris) \text{ if } M \in \MFfree.$$
Let $\repcris$ denote the category of continuous $E$-linear $G_{K_0}$-representations on finite dimensional $E$-vector spaces $V$ which are crystalline with Hodge-Tate weights in $\{0, \ldots, p-1\}$.
An object $V \in \repcris$ is called \emph{nilpotent} if  $V$ does not admit nontrivial unramified quotient (it is easy to check that $V$ admits a nontrivial unramified quotient as a $\Q_p$-representation if and only if $V$ admits a nontrivial unramified quotient as an $E$-representation. See the proof of Theorem \ref{equiv} (4) below). We denote by $\repcrisOu$ the category of $G_{K_0}$-stable $\bigO$-lattices in nilpotent representations in $\repcris$.

We gather the following useful results from \cite{FL82} and \cite{Laf80}.

\begin{thm}\label{equiv}\begin{enumerate}

\item The contravariant functor $T^*_\cris$ from $\MFtor^\n$ to $\Rep_\bigO (G_{K_0})$ is exact and fully faithful.
\item An object $M \in \MFfree$ is nilpotent if and only if $M/ \varpi M$ is nilpotent.
\item The essential image of $T^*_\cris: \MFtor^\n \to \Rep_{\bigO}(G_{K_0})$ is closed under taking sub-objects and quotients.
\item Let $V$ be a crystalline representation of $G_{K_0}$ and $K'$ a finite unramified extension of $K_0$. Then $V$ is nilpotent if and only if $V|_{G_{K'}}$ is nilpotent.
\item $T^*_\cris$ induces an anti-equivalence between the category $\MFfree^\n $ and the category $\repcrisOu$.

%There in an equivalence of categories between $\mathcal{MF}^u_{\bigO}$ and $\Rep_{\bigO, \textnormal{cris}}^u(G_{K_0})$.
\end{enumerate}
\end{thm}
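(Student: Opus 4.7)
The plan is to treat the theorem as a synthesis of known results from \cite{FL82} and \cite{La}, citing those references for the substantive parts (1), (3), and (5) and supplying direct arguments only for (2) and (4). Statements (1), (3), (5) together comprise the Fontaine-Laffaille anti-equivalence as extended by \cite{La} from Hodge-Tate range $[0, p-2]$ to $[0, p-1]$ under the nilpotency hypothesis: (1) is the exactness and full faithfulness of $T^*_\cris$ on $\MFtor^\n$; (3) identifies the essential image as a full subcategory of $\Rep_\bigO(G_{K_0})$ closed under sub and quotient, via the analysis in \cite{FL82} of which torsion $\bigO$-linear representations admit the requisite Fontaine-Laffaille structure; and (5) upgrades (1) to an anti-equivalence $\MFfree^\n \simeq \repcrisOu$ by passing to the compatible inverse systems $M = \varprojlim_m M/\varpi^m M$ on the filtered module side and the inverse limit of torsion Galois quotients on the representation side, matched under nilpotency thanks to (2).

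For (2), I would induct on $m$, with the base case $m = 1$ the hypothesis. Since each $\Fil^i M$ is a $W(k)$-direct summand of $M$, the identity $\Fil^i M \cap \varpi M = \varpi \Fil^i M$ holds, and multiplication by $\varpi$ consequently induces an isomorphism $M/\varpi^m M \xrightarrow{\sim} \varpi M/\varpi^{m+1} M$ in the abelian category $\MFtor$ (the map is compatible with $\varphi_i$ because $\varpi \in \bigO$ lies in the coefficient ring, on which $\textnormal{Frob}_{W(k)} \otimes 1$ acts trivially). Now suppose $N \subset M/\varpi^{m+1} M$ is a nonzero subobject with $\Fil^1 N = 0$; the reduction $\bar N \subset M/\varpi M$ inherits $\Fil^1 \bar N = 0$, so if $\bar N \neq 0$ this contradicts the base case, while if $\bar N = 0$ then $N$ sits inside $\varpi M/\varpi^{m+1} M \cong M/\varpi^m M$ as a nonzero subobject with $\Fil^1 = 0$, contradicting the inductive hypothesis.

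Statement (4) is essentially immediate from the definition. The nontrivial direction, ``$V$ nilpotent $\Rightarrow V|_{G_{K'}}$ nilpotent'', uses that an unramified extension $K'/K_0$ satisfies $I_{K_0} = I_{K'}$ (both equal $G_{K_0^{\textnormal{unr}}}$, since the maximal unramified extension does not change under a finite unramified base change), so the coinvariants $V_{I_{K_0}}$ and $V_{I_{K'}}$ coincide as $E$-vector spaces. Hence the maximal unramified $E$-linear quotient of $V$ is unchanged upon restriction, and combined with the footnote's observation that unramified quotients as $\Q_p$-representations and as $E$-representations coincide, this yields the biconditional. The reverse direction, obtained by taking $K' = K_0$, is trivial.

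The main obstacle is that statements (1), (3), (5) are genuine theorems proved in \cite{FL82} and \cite{La}; reproducing their proofs would be a substantial detour beyond the scope of this note. The strategy is therefore to cite them cleanly and verify in detail only the elementary self-contained statements (2) and (4), after carefully matching conventions (in particular, confirming that the nilpotency notion used here on $\MFtor$ agrees with Laffaille's, and that the Galois side definition via ``no nontrivial unramified quotient'' matches the condition cut out by the essential image of $T^*_\cris$).
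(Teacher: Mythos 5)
Your handling of parts (1)--(4) is essentially the paper's: (1), (3) are cited to \cite{FL82} (the paper additionally cites Theorem 6.1 there for (2), so your direct induction for (2) is a self-contained alternative, and it is correct --- the reduction-mod-$\varpi$ / multiplication-by-$\varpi$ dichotomy works, granted the strictness of morphisms in the abelian category $\MFtor$, which is what lets you say the image $\bar N$ ``inherits'' $\Fil^1\bar N=0$); and your argument for (4) via $I_{K_0}=I_{K'}$ is exactly the paper's observation that nilpotency is the vanishing of $(V^*)^{I_{K_0}}$.

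The genuine gap is part (5). The paper states explicitly that (5) ``has been essentially proved in \cite{FL82} and \cite{La} but has not been recorded in literature,'' and therefore supplies a proof; your plan to ``cite them cleanly'' is not available, and your one-sentence substitute (pass to compatible inverse systems, ``matched under nilpotency thanks to (2)'') does not contain the substantive content. Three things must actually be argued. First, that $T^*_\cris$ carries $\MFfree^\n$ \emph{into} $\repcrisOu$: one needs the rank equality $\mathrm{rank}_\bigO\, T^*_\cris(M)=\mathrm{rank}_{W(k)_\bigO}M$, that $M$ is a lattice in $D_\cris(V^*)$ so $V$ is crystalline with weights in $\{0,\dots,p-1\}$, and that nilpotency of $M$ forbids an unramified quotient of $V$ (an unramified quotient $\tilde V$ produces $M'=D_\cris(\tilde V^*)\cap M$ with $M'\cap\Fil^1M=\{0\}$). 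Second, essential surjectivity: Laffaille's result (\cite{La} \S3.2) only gives \emph{some} strongly divisible lattice $M\subset D_\cris(V^*)$; one must show this $M$ is nilpotent, which the paper does by a Fitting-lemma argument --- if $\bar M$ had a subobject $N$ with $\Fil^1N=0$ then $\varphi_0(N)=N$, so $\bigcap_m\varphi_0^m(M)\neq\{0\}$ and $V$ would have an unramified quotient. Third, one must hit \emph{every} $G_{K_0}$-stable lattice $L'\subset V$, not just $T^*_\cris(M)$; this is where (3) genuinely enters (realizing $f_m(L')\subset L/p^mL\simeq T^*_\cris(M/p^mM)$ as $T^*_\cris(M'_m)$ for some $M'_m\in\MFtor^\n$ and taking $M'=\varprojlim_m M'_m$), together with a descent from $\bigO=\Z_p$ back to general $\bigO$ via the product decomposition $W(k)_\bigO\simeq\prod_i\bigO_i$. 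Without these steps the anti-equivalence in (5) --- which is the input the main theorem actually uses --- is unproved.
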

\begin{proof}(1) and (2) follow from Theorem 3.3 and Theorem 6.1 in \cite{FL82}. Note that $\underline{\text{U}}_S$ in \cite{FL82} is just $T^*_\cris$ here.  To prove (3), we may assume that $\bigO= \Z_p$ and it suffices to check that $T^*_\cris$ sends simple objects in $\MFtor^\n$ to simple objects in $\Rep_\bigO(G_{K_0})$ (see Property 6.4.2 in \cite{Car06}). And this is proved in \cite{FL82}, \S6.13 (a).  (4) is clear because  $V$ is nilpotent if and only if $(V^*)^{I_{K_0}} = \{0\}$ where $I_{K_0}$ is the inertia subgroup of $G_{K_0}$.

(5) has been essentially proved in \cite{FL82} and \cite{Laf80} but has not been recorded in literature. So we sketch the proof here. First, by \S7.14 of \cite{FL82}, $T^*_\cris(M) $ is a continuous $\bigO$-linear $G_{K_0}$-representation on a finite free $\bigO$-module $T$. By (1) and Theorem 0.6 in \cite{FL82}, we have $\text{rank}_{\bigO} (T) = \text{rank}_{W(k)_\bigO} M= d$. It is easy to see that $M$ is a $W(k)$-lattice in $D_\cris (V^*)$ where $V = \Q_p \otimes_{\Z_p} T$. Hence $V$ is crystalline with Hodge-Tate weights in $\{0, \dots, p-1\}$. To see that $V$ is nilpotent, note that $V$ has an unramified quotient  $\tilde V$ is equivalent to  that there exists an $M'\subset M$ such that
$M'\cap \Fil ^1 M = \{0\}$ and $M/M'$ has no $p$-torsion (just let $M ':= D_\cris (\tilde V^*) \cap M$). So $M$ is nilpotent implies  that $V$ is
nilpotent. Hence by (1), $T^*_\cris$ is an exact, fully faithful functor from  $\MFfree^\n $ to $\repcrisOu$.

To prove the essential surjectivity of $T^*_\cris$, it suffices to assume that $\bigO= \Z_p$. Indeed, suppose that $T$ is an object in $\repcrisOu$ with $d= \text{rank}_{\bigO} T$. Let $V = \Q_p \otimes_{\Z_p}T$ and $D= D_\cris (V^*)$. It is well-known that $D$ is a finite free
$E \otimes_{\Q_p}K_0$-module with rank $d$.  If there exists an $M\in {\mathcal{MF}}^\n _{\Z_p, \text{fr}}$ such that $T_\cris ^*(M) \simeq T$ as $\Z_p[G]$-modules. By the full faithfulness of $T^*_\cris$, $M$ is naturally a $W(k)_\bigO$-module. Since $D$ is $E \otimes_{\Q_p} K_0$-free,  it is standard to show that $M$ is finite $W(k)_\bigO$-free  by computing $\bigO_i$-rank of $M_i$, where $M_i: = M \otimes_{W(k)_\bigO} \bigO_i$ and  $W(k)_\bigO\simeq \prod_i \bigO_i$.

Now suppose that $V \in \textnormal{Rep}_{\Qp, \textnormal{cris}}^{[0, p-1]}(G_{K_0})$ is nilpotent and $D= D_\cris (V^*)$. By \cite{Laf80} \S3.2, there always exists a $W(k)$-lattice $M \in \mathcal{MF}_{\Z_p, \text{fr}}$ inside $D$.  We claim that $M$ is nilpotent. Suppose otherwise, then $\bar M :=M/ p M$ is not nilpotent, and there exists $N \subset \bar M$ such that $\Fil ^1 N =\{ 0\}$. Consequently $\varphi_0 (\Fil ^0 N) = \varphi_0 (N) = N$. Thus $\bigcap_m (\varphi_0)  ^m  (M) \not = \{0\}$. By Fitting Decomposition Theorem, we see that  $M^{\textnormal{mult}}: =\bigcap_m (\varphi_0)  ^m  (M) \not = \{0\}$ is in fact a direct summand of $M$. Let $D^{\textnormal{mult}}=M^{\textnormal{mult}}\otimes_{W(k)}K_0$, it is a $\varphi$-submodule of $D$. Since $D$ is weakly admissible, $t_H(D^{\textnormal{mult}}) \leq t_N(D^{\textnormal{mult}})=0$. Thus we must have $t_H(D^{\textnormal{mult}})=t_N(D^{\textnormal{mult}})=0$, and $D^{\textnormal{mult}}$ is weakly admissible. It is clear that $V^*_{\textnormal{cris}}(D^{\textnormal{mult}})$ is an unramified quotient of $V$, contradicting that $V$ is nilpotent. Thus, $M$ is nilpotent.

It remains to show that any $G_{K_0}$-stable $\Z_p$-lattices $L' \subset V$ is given by an object $ M ' \in {\mathcal{MF}}^\n _{\Z_p, \text{fr}}$. Let $L: = T^*_\cris (M)$. Without loss of generality, we can assume that $L' \subset L$. For sufficiently large $m$, $p^mL \subset L'$, so $L'/p^mL \subset L/p^mL$. Since $L/ p^m L \simeq T^*_\cris (M / p^m M)$. By (3), there exists an object $M'_m \in \MFtor^\n$ such that $T^*_\cris (M'_m)  \simeq L'/ p^m L $. Finally $M' = \varprojlim_m  M'_m$ is the desired object in $\MFfree^\n.$

\end{proof}

Contravariant functors like $T_\cris^*$ are not convenient for deformation theory. So we define a covariant variant for $T_\cris^*$. Define $T_\cris (M) : = (T^*_\cris (M))^*(p-1)$, more precisely,
$$T_\cris (M):= \Hom_{\bigO}( T^*_\cris (M), E/ \bigO)(p-1)\text{ if } M \in \MFtor^\n ,  $$
and
$$T_\cris (M):= \Hom_{\bigO}( T^*_\cris (M),  \bigO)(p-1)\text{ if } M \in \MFfree^\n .  $$

Let $\rho: G_{K_0} \to \GL_d (\bigO)$ be a continuous representation such that there exists an $M \in \MFfree^\n$ satisfying $T_\cris (M) =  \rho$. Then
$T_\cris (\bar M)= \bar \rho:= \rho \mod \varpi \bigO$ where $\bar M: =M / \varpi M $.
Let $\mathcal C^f_\bigO$ denote the category of Artinian local
$\bigO$-algebras for which the structure map $\bigO \to  R$ induces an isomorphism
on residue fields. The morphisms in the category are local homomorphisms inducing isomorphisms on the residue fields. Define a deformation functor
$$D_\cris^\n (R):= \{\text{lifts } \tilde \rho: G_{K_0} \to \GL_d (R) \text{ of } \bar \rho   |  \exists M \in \mathcal{MF}_{\bigO,  \textnormal{tor}}^\n \text{ satisfying } T_\cris (M) \simeq \tilde \rho\}.$$
Here $T_\cris (M) \simeq \tilde \rho$ as $\bigO[G_{K_0}]$-modules. To recapture the $R$-structure, let $\mathcal{MF}_{R}$ be the category similarly defined as $\bigMFO$ by changing $\bigO$ to $R$ everywhere (morphisms in $\mathcal{MF}_{R}$ are $W(k)_R$-morphisms). It is clear that $\mathcal{MF}_{R}$ is a subcategory of $\mathcal{MF}_{\bigO,  \textnormal{tor}}$ (note that $R$ is a $p$-power torsion ring).
Let $\mathcal{MF}^\n_{R, \text{fr}}$ be the full subcategory of $\mathcal{MF}_{R}$ whose objects are nilpotent (as objects in $\mathcal{MF}_{\bigO,  \textnormal{tor}}$) and finite $W(k)_R$-free, and $\Rep_{R, \text{fr}}(G_{K_0})$ the category of $R$-linear continuous representations of $G_{K_0}$ on finite free $R$-modules. It is easy to show that $T_\cris$ restricted to $\mathcal{MF}^\n_{R, \text{fr}}$ is an exact fully faithful functor from $\mathcal{MF}^\n_{R, \text{fr}}$ to $\Rep_{R, \text{fr}}(G_{K_0})$. Thus, the $R$-structure on $\tilde \rho$ guarantees an $R$-structure on $M$ in the definition of $D_\cris ^\n(R)$, i.e., if $T_\cris (M) \simeq \tilde \rho$, then $M \in \mathcal{MF}^\n_{R, \text{fr}}$.

\begin{prop}\label{def}
Assume that $K_0 \subset E$. Then $D^\n_\cris$ is pro-represented by a formally smooth $\bigO$-algebra $R^{\n} _{\bar \rho, \cris}$.
\end{prop}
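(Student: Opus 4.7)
The plan is to apply Schlessinger's criteria to $D^\n_\cris$ after translating the deformation problem into one for Fontaine-Laffaille modules via Theorem~\ref{equiv}. The assumption $W(k) \subset \bigO$ ensures that $W(k) \otimes_{\Z_p} \bigO$ splits as a product of copies of $\bigO$, so any finitely generated $W(k) \otimes_{\Z_p} R$-module whose reduction mod $\m_R$ is a free $W(k) \otimes_{\Z_p} \mathbb{F}$-module of rank $d$ is itself free of rank $d$ by Nakayama. Write $\bar M$ for the Fontaine-Laffaille module corresponding to $\bar\rho$. Using the full faithfulness of $T^*_\cris$ (Theorem~\ref{equiv}(1)) together with parts (3) and (5), giving a deformation $\tilde\rho \in D^\n_\cris(R)$ for $R \in \mathcal{C}^f_\bigO$ is the same data as an object $M \in \MFtor^\n$ with a compatible $R$-action making $M$ free of rank $d$ over $W(k) \otimes_{\Z_p} R$, with each $\Fil^i M$ a $W(k) \otimes_{\Z_p} R$-direct summand, the $\varphi_i$ being $R$-linear, together with an isomorphism $M \otimes_R \mathbb{F} \cong \bar M$.

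Schlessinger's conditions (H1)--(H3) are then routine: fiber-product data glue term-by-term on the underlying module, filtration and Frobenius, and the tangent space is a finite-dimensional $\mathbb{F}$-vector space. The key point is formal smoothness. Given a small surjection $R' \twoheadrightarrow R$ in $\mathcal{C}^f_\bigO$ with kernel $I$ satisfying $I \m_{R'} = 0$, we construct a lift $M'$ of $M$ as follows. First, lift the filtered module: choose a splitting $M = \bigoplus_i L_i$ realizing the filtration (so $\Fil^j M = \bigoplus_{i \geq j} L_i$), lift each $L_i$ to a free $W(k) \otimes_{\Z_p} R'$-module $L_i'$ of the same rank, and set $M' = \bigoplus_i L_i'$ with $\Fil^j M' = \bigoplus_{i \geq j} L_i'$. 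Next, construct the Frobenius maps $\varphi_i' \colon \Fil^i M' \to M'$ inductively from $i = p-1$ downwards: take any $R'$-linear lift of $\varphi_{p-1}$, and for $i < p-1$, using the decomposition $\Fil^i M' = \Fil^{i+1} M' \oplus L_i'$, set $\varphi_i'$ equal to $p \varphi_{i+1}'$ on $\Fil^{i+1} M'$ (forcing the compatibility relation) and let $\varphi_i'$ on $L_i'$ be any lift of $\varphi_i|_{L_i}$. Strong divisibility $\sum_i \varphi_i'(\Fil^i M') = M'$ then holds by Nakayama's lemma, since it holds modulo $\m_{R'}$.

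The remaining subtlety, and the main obstacle, is preservation of nilpotency, i.e., that $M' \in \MFtor^\n$. This follows from the short exact sequence $0 \to IM' \to M' \to M \to 0$ in $\MFtor$: the subobject $IM'$ is isomorphic to $I \otimes_{\mathbb{F}} \bar M$, hence a direct sum of copies of the nilpotent $\bar M$ and therefore nilpotent, while $M$ is nilpotent by hypothesis. One checks directly from the definition that nilpotency is closed under extensions in $\MFtor$: if $N \subset M'$ satisfies $\Fil^1 N = 0$, then $N \cap IM'$ has $\Fil^1 = 0$ and hence vanishes by nilpotency of $IM'$, so $N$ embeds into $M$ with $\Fil^1 = 0$, forcing $N = 0$. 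Thus $M' \in \MFtor^\n$, formal smoothness is established, and Schlessinger's theorem yields pro-representability of $D^\n_\cris$ by a formally smooth $\bigO$-algebra $R^\n_{\bar\rho, \cris}$.
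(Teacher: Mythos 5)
Your proof is correct and follows essentially the same route as the paper's, which simply outsources the key construction to Lemma 2.4.1 of Clozel--Harris--Taylor: translate the lifting problem into one for Fontaine--Laffaille modules, lift the filtration by splitting $M$ into free graded pieces, extend the $\varphi_i$ downward using the forced relation $\varphi_i\mid_{\Fil^{i+1}} = p\varphi_{i+1}$, and recover strong divisibility by Nakayama. Two points of comparison. For pro-representability the paper does not invoke Schlessinger but realizes $D^\n_\cris$ as a subfunctor of the (already pro-representable) framed deformation functor, using Theorem \ref{equiv} (1) and (3) to see the condition is well posed; for a lifting functor the two packagings are interchangeable, though if you quote Schlessinger you should note that (H1)--(H3) alone only give a hull and that the term-by-term gluing you describe in fact yields the full left-exactness needed for pro-representability. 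For preservation of nilpotency you are more careful than the paper, which asserts that any lift of $\bar M$ is automatically in $\MFtor^\n$ ``by Theorem \ref{equiv} (3)''; your extension-closure argument via $0 \to IM' \to M' \to M \to 0$ with $IM' \simeq I\otimes_{\mathbb F}\bar M$ is the honest justification. The only detail to tighten there is that $N\cap IM'$ must be seen to be a genuine subobject (in particular to satisfy $\varphi_0(N\cap IM') = N\cap IM'$, which is what nilpotency of $IM'$ is tested against); this follows by identifying it with $\ker(N\to M)$ and using that $\MFtor$ is abelian, or by first pushing $N$ into $M$ to conclude $N\subseteq IM'$ and then applying nilpotency of $IM'$ directly.
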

\begin{proof} By (1) and (3) in Theorem \ref{equiv}, and by \S1 in \cite{Ram93}, $D^\n_\cris$ is a sub-functor of the framed Galois deformation functor of $\bar \rho$ and pro-represented by an $\bigO$-algebra $R^{\n}_{\bar \rho ,\cris}$. The  formal smoothness of $R^{\n}_{\bar \rho, \cris}$ can be proved similarly as in Lemma 2.4.1 in \cite{CHT08}. Indeed, suppose that $R$ is an object of $\mathcal C^f_\bigO$ and $I$ is an ideal of
$R$ with $\frak m_RI = (0)$. To prove the formal smoothness of $R^{\n}_{\bar \rho, \cris}$, we have  to show that any lift in $ D^\n_\cris (R/ I) $ admits a lift in $D^\n _\cris (R)$. Then this is equivalent to lift the corresponding $N \in \mathcal{MF}_{R/I,  \textnormal{fr}}$ to $\tilde N \in  \mathcal{MF}_{R,  \textnormal{fr}}$ (note that any lift $N$ of $\bar M$ will be automatically in $\mathcal{MF}_{R, \text{fr}}^\n$ by Theorem 6.1 (i) of \cite{FL82} or Theorem \ref{equiv} (3)). The proof is verbatim as in Lemma 2.4.1 in \cite{CHT08}. Note that the proof did not use the restrictions (assumed for \S 2.4.1 in loc. cit.)  that $\Fil^{p-1}M = \{0\}$ and
$ \dim _k (\text{gr} ^i {{\bf G}}^{-1}_{\tilde v}(\bar r |_{G_{F_{\tilde v }}})) \otimes_{\bigO_{F_{\tilde v }}, \tilde \tau  } \bigO \leq 1 .$
\end{proof}

\section{The Main Theorem and Its Proof}
\begin{thm} [\textbf{Main Theorem}]
Suppose $\rho: G_{K_0} \to \GL_d(\Qpbar)$ is a crystalline representation, and for each $\tau: K_0 \hookrightarrow \Qpbar$, the Hodge-Tate weights  $\HT_{\tau}(\rho) \subseteq \{a_{\tau}, \ldots, a_{\tau}+p-1\}$ for some $a_{\tau}$, then $\rho$ is potentially diagonalizable.
\end{thm}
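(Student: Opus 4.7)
The plan is to reduce to an irreducible nilpotent crystalline representation with Hodge--Tate weights in $\{0,\ldots,p-1\}$, and then produce a sum-of-characters lift that lives on the formally smooth component supplied by Proposition \ref{def}. First I would construct, for each embedding $\tau$, a crystalline character $\chi_0$ of $G_{K_0}$ with $\HT_\tau(\chi_0)=\{a_\tau\}$; such characters exist over unramified $K_0$, for example as $T_\cris$ of a rank-$1$ object of $\MFfree$ after enlarging the coefficient field $E$ to contain $W(k)$. Since twisting by a crystalline character preserves potential diagonalizability, replacing $\rho$ by $\rho\otimes\chi_0^{-1}$ reduces to $\HT_\tau(\rho)\subseteq\{0,\ldots,p-1\}$ for every $\tau$. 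Applying Lemma \ref{reduce to irreduible} to a composition series of $\rho$ then reduces to the case $\rho$ is semisimple, after which I split off the maximal unramified direct summand---trivially \PD\ as a sum of unramified crystalline characters---and keep the nilpotent complement $\rho^{\n}$.

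After enlarging $E$ so that $W(k)\subset\bigO$, I fix a $G_{K_0}$-stable lattice $T\subset\rho^{\n}$ whose reduction $\bar T$ is semisimple, and let $M\in\MFfree^{\n}$ be the Fontaine--Laffaille module with $T_\cris(M)\cong T$ furnished by Theorem \ref{equiv}(5). Choosing an unramified extension $K'/K_0$ of sufficiently large degree that $\bar T|_{G_{K'}}$ splits as a direct sum of characters $\bar\psi_1\oplus\cdots\oplus\bar\psi_d$, the base change $\bar M|_{K'}$ decomposes in $\MFtor^{\n}$ (now over $W(k')_{\bigO}$) as $\bar M_1\oplus\cdots\oplus\bar M_d$ with each $\bar M_j$ of rank one. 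By Theorem \ref{equiv}(3), each $\bar M_j$ is nilpotent as a sub-object of the nilpotent $\bar M|_{K'}$; for a rank-$1$ object this forces $\Fil^1\bar M_j\neq\{0\}$, i.e.\ $\bar M_j$ has a positive Hodge--Tate weight at some embedding. I then lift each $\bar M_j$ to a rank-$1$ object $M_j\in\MFfree^{\n}$ with the same HT-weight profile (obstruction-free for rank-$1$ FL modules once $E$ is sufficiently large) and set $\chi_j:=T_\cris(M_j)[\tfrac{1}{p}]$. The resulting $\bigoplus_{j=1}^{d}\chi_j$ is a direct sum of crystalline characters, nilpotent as a representation of $G_{K'}$, with the same mod-$\varpi$ reduction and the same Hodge--Tate weights as $\rho^{\n}|_{G_{K'}}$.

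To conclude, I invoke Proposition \ref{def}: the nilpotent crystalline deformation ring $R^{\n}_{\overline{\rho^{\n}}|_{G_{K'}},\cris}$ is formally smooth over $\bigO$, hence its generic fibre is irreducible. Since $\Spec R^{\n}[\tfrac{1}{p}]$ sits naturally inside $\Spec R^{\Box}_{\overline{\rho^{\n}}|_{G_{K'}},\{\HT_\tau\},K'\textnormal{-cris}}[\tfrac{1}{p}]$, both $\rho^{\n}|_{G_{K'}}$ and $\bigoplus\chi_j$ give rise to points on a common irreducible component of the latter, so $\rho^{\n}|_{G_{K'}}\sim\bigoplus\chi_j$. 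Therefore $\rho^{\n}$ is \PD, and combined with the easy unramified case this settles the theorem for $\rho$.

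I expect the delicate step to be guaranteeing that $\bigoplus\chi_j$ really lies on the nilpotent component, i.e.\ that no $\chi_j$ is unramified: otherwise the sum-of-characters lift would sit in a different irreducible component of the framed crystalline deformation space, and the formal smoothness of Proposition \ref{def} would not connect it to $\rho^{\n}|_{G_{K'}}$. This in turn reduces to every rank-$1$ piece $\bar M_j$ of $\bar M|_{K'}$ carrying a positive Hodge--Tate weight, which is precisely what the nilpotency of $\bar M$ together with Theorem \ref{equiv}(3) delivers---this is the new ingredient over the range $\{0,\ldots,p-2\}$ treated in \cite{BLGGT10}, where no nilpotency hypothesis is required and Fontaine--Laffaille theory applies directly.
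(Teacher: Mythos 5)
Your overall architecture matches the paper's: reduce to a case where the extended Fontaine--Laffaille functor $T_\cris$ applies, pass to an unramified $K'$ over which the mod-$\varpi$ Fontaine--Laffaille module becomes diagonal (resp.\ triangular), lift that shape to characteristic zero, and connect the two lifts through the formally smooth ring of Proposition \ref{def}. But there is a genuine error in your reduction step, coming from a systematic confusion between $\rho$ and $\rho^*(p-1)$. Since $T_\cris(M)=(T^*_\cris(M))^*(p-1)$, the condition for a lattice $T$ to be of the form $T_\cris(M)$ with $M\in\MFfree^{\n}$ is that $T^*(p-1)$ be nilpotent, i.e.\ that $\rho$ have \emph{no subrepresentation isomorphic to an unramified twist of $\epsilon^{p-1}$}; it is not the condition that $\rho$ have no unramified quotient or summand. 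Splitting off the maximal unramified direct summand therefore removes the wrong pieces: for example $\rho=\epsilon^{p-1}$ is not unramified, so it survives your reduction, yet it is not $T_\cris$ of any nilpotent module, and Theorem \ref{equiv}(5) does not furnish the $M$ you need. The same confusion reappears in your ``delicate step'': the characters that must be excluded among the rank-one pieces are the unramified twists of $\epsilon^{p-1}$ (those with $\Fil^1\bar M_j=0$), not the unramified characters (which correspond to $\Fil^{p-1}\bar M_j=\bar M_j$ and are perfectly nilpotent). The paper avoids all of this by reducing to $\rho$ irreducible via Lemma \ref{reduce to irreduible}, for which nilpotency of $\rho^*(p-1)$ is automatic in dimension $\ge 2$, the one-dimensional case being trivial. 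Your reduction is repairable---split off the summands isomorphic to unramified twists of $\epsilon^{p-1}$ instead---but as written the step fails.

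Beyond that, your route through a \emph{direct sum} decomposition $\bar M|_{K'}=\bigoplus_j\bar M_j$ genuinely differs from the paper, which only produces a filtration of $\bar M$ with rank-one graded pieces, lifts the filtered module via Lemma 1.4.2 of \cite{BLGGT10}, and then must invoke Lemma 1.4.3(1) of \cite{BLGGT10} to see that the resulting triangular crystalline lift is potentially diagonalizable. Your version buys a cleaner endgame (a genuine sum of crystalline characters, diagonalizable by definition), but it costs you the hypothesis that $\bar T$ is semisimple over the finite residue field $\mathbb{F}$. That is not automatic for a fixed coefficient field: an irreducible $\rho$ need not admit a stable $\bigO_E$-lattice with semisimple reduction until one passes to a sufficiently \emph{ramified} enlargement of $E$ (this is implicit in the lattice choice made in Lemma \ref{reduce to irreduible}, which works over $\bigO_{\overline{\Q}_p}$), and enlarging $E$ only so that $W(k)\subset\bigO$ does not suffice; you should state this. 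With these two repairs the argument goes through and is essentially parallel to the paper's.
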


\begin{proof}
By Lemma 2.2.1.1 of \cite{BM02}, we may assume that $\rho$ factors through $\GL_d (\bigO)$ for a sufficiently large $\bigO$.
By Lemma \ref{reduce to irreducible}, we can assume that $\rho$ is irreducible and hence  $\rho^* (p-1)$ is nilpotent. As in the proof of Lemma 1.4.3 of \cite{BLGGT10}, twisting by a suitable crystalline character, we can assume $a_{\tau}=0$ for all $\tau$. Then we can choose an unramified extension $K'$, such that $\rhobar\mid_{G_{K'}}$ has a $G_{K'}$-invariant filtration with 1-dimensional graded pieces. By Theorem \ref{equiv} (4), $\rho^*(p-1)$ is still nilpotent when restricted to $G_{K'}$. Without loss of generality, we can assume that $K _0 = K'$.
Now by Theorem \ref{equiv} (5), there exists an $M \in \MFfree^\n$ such that $T_\cris (M) \simeq \rho$. Then $\bar M := M / \varpi M$ is nilpotent and $T_\cris (\bar M) \simeq \bar \rho$. Note that $\bar M$ has a filtration with rank-1 $ k \otimes_{\Z/p\Z}\mathbb F$-graded pieces to correspond to the filtration of $\bar \rho$. Now by  Lemma 1.4.2 of \cite{BLGGT10}, we can lift $\bar M$ to $M'\in \MFfree$ which has filtration with  rank-1 $W(k)_\bigO$-graded pieces (note that the proof of Lemma 1.4.2 of \cite{BLGGT10} did not use the restriction that $\HT_{\tau}(\rho) \subseteq \{0, \ldots, p-2\}$). Hence $M'$ is nilpotent by Theorem \ref{equiv} (2). Then $\rho'=  T_\cris (M')$ is crystalline and has a $G_{K_0}$-invariant filtration with 1-dimensional graded pieces by Theorem \ref{equiv} (5).  Then part 1 of Lemma 1.4.3 of \cite{BLGGT10} implies that  $\rho'$ is potentially diagonalizable. Now it suffices to show that $\rho $ connects to $\rho'$. But it is obvious that $R^\n_{\bar \rho, \cris}$ is a quotient of $R^{\Box}_{\bar \rho, \{\HT_{\tau}(\rho)\}, K\textnormal{-cris}}$. By Proposition \ref{def}, we see that $\rho$ and $\rho'$ must be in the same connected component of $\Spec(R^{\Box}_{\bar \rho, \{\HT_{\tau}(\rho)\}, K\textnormal{-cris}}[\frac 1 p])$. Hence $\rho\sim \rho'$ and $\rho$ is potentially diagonalizable.

\end{proof}

\bibliographystyle{alpha}

\end{document}